\theoremstyle{plain}
\newtheorem{teo}{Theorem}[section]
\newtheorem{prop}[teo]{Proposition}
\theoremstyle{definition}
\newtheorem{ejem}[teo]{Example}
\newcommand{\ts}[1]{\normalfont{\textsf{#1}}}
\newcommand{\K}{\ts k}
\renewcommand{\a}{\alpha}
\renewcommand{\b}{\beta}
\newcommand{\G}{\Gamma}
\newcommand{\HH}[1]{\operatorname{dim}_{\K}\operatorname{HH}^{#1}(A)}
\newcommand{\Hh}[1]{\operatorname{dim}_{\K}\operatorname{HH}^{#1}(\Lambda)}
\newcommand{\abs}[1]{\lvert #1\rvert}
\newcommand{\Ext}[4]{\ts{Ext}_{#1}^{#2}(#3,\,#4)}
\newcommand{\rad}[1]{\ts{rad} #1}
\newcommand{\Hom}[3]{\ts{Hom}_{#1}(#2,\,#3)}
\newcommand{\mor}[3]{$#1\colon #2 \to #3$}
\newcommand{\Ge}[1]{\operatorname{dim}_{\K} \K \mathcal G_{#1}/ \operatorname{Im}(1-t)}
\newcommand{\Par}[1]{^-(0,0)^-_{#1}}
\title{Hochschild cohomology of $m$-Cluster tilted algebras of type $\widetilde{\mathbb{A}}$}
\author[V. Gubitosi]{Viviana Gubitosi}
\address{Instituto de Matem\'{a}tica y Estad\'{\i}stica Rafael Laguardia, Facultad de Ingenier\'{\i}a - UdelaR, Montevideo, Uruguay, 11200 }
\email{gubitosi@fing.edu.uy}
\keywords{$m$-cluster tilted algebras; gentle algebras; derived equivalence; Hochschild cohomology; branched algebras}
\begin{document}
\maketitle

\begin{abstract}
 In this paper, we compute the dimension of the Hochschild cohomology groups of any $m$-cluster tilted algebra of type $\tilde{\mathbb{A}}$. Moreover, we give conditions on the bounded quiver of an $m$-cluster tilted algebra $\Lambda$ of type $\tilde{\mathbb{A}}$ such that the Gerstenhaber algebra $\operatorname{HH}^*(\Lambda)$ has non-trivial multiplicative structures. We also show that the derived class of gentle $m$-cluster tilted algebras is not always completely determined by the dimension of the Hochschild cohomology.
\end{abstract}

\section*{Introduction}

The Hochschild cohomology groups $\operatorname{HH}^{i}(A)$ of an algebra $A$, where $i\geq 0$, were introduced by Hochschild in \cite{Ho46}. The
low-dimensional groups, namely for $i=0,1,2$, have a concrete interpretation of classical algebraic structures, but in general it is quite hard to compute
them. However, an explicit formula for the dimension of the Hochschild cohomology $\operatorname{HH}^{i}(A)$ of some subclasses of
special biserial algebras had been computed in terms of combinatorial data, for example in \cite{B06,ST10,Lad12b}. Recently Redondo and Rom\'an obtained a formula for quadratic string algebras and therefore for gentle algebras (see \cite{RR15}). Later,  Valdivieso in \cite{Va} compute the dimension of the Hochschild cohomology groups for Jacobian algebras from unpuctured Riemann surfaces.

Cluster categories were introduced in \cite{Buan2006} as a representation theoretic framework for the  cluster algebras of Fomin and Zelevinski in \cite{FZ02}. Several consequences were given by this categorification, for example, there is a bijection between the set of clusters in a cluster algebra and the set of the basic tilting objects in the corresponding cluster category, see \cite{Buan2006}.
Given an hereditary finite dimensional algebra $H$  over  an algebraically closed field $\K$ the $m$-cluster category is defined to be $\mathcal{C}_m(H):=\mathcal{D}^b(H)/ \tau^{-1} [m]$, where $[m]$  denotes the $m$-th power of  the shift functor $[1]$ and $\tau$ is the Auslander - Reiten translation in $\mathcal{D}^b(H)$. By a result of Keller \cite{K05}, the $m$-cluster category is triangulated.  For the  $m$-cluster category,  $m$-cluster tilting objects have been defined by Thomas, in \cite{Thomas2007}, who in addition showed that they are in bijective correspondence with the $m$-clusters associated by  Fomin and Reading to a finite root system in \cite{FR05}. The endomorphism algebras of the $m$-cluster tilting objects are called $m$-cluster tilted algebras or, in case $m=1$, cluster tilted algebras.

In \cite{ABCP09} it has been shown that cluster tilted algebras are gentle if and only if they are of type  $\mathbb{A}$ or $\tilde{\mathbb{A}}$. On the other hand, using arguments similar to those of \cite{ABCP09}, Murphy showed in \cite{Murphy2010} that $m$-cluster tilted algebras of type $\mathbb{A}$ are gentle and he described the connected components of $m$-cluster tilted algebras up to derived equivalence, a result analogous to that of \cite{Buan2008}. Later, a similar work has been done in \cite{Gubitosi} for  $m$-cluster tilted algebras of type $\mathbb{\widetilde{A}}$, where it is shown that $m$-cluster tilted algebras of type $\mathbb{\widetilde{A}}$ are gentle and their possible bound quivers are described. Moreover, in \cite{BG14} and \cite{Gub1} the algebras that are derived equivalent to $m$-cluster tilted algebras of type  $\mathbb{A}$ and $\mathbb{\widetilde{A}}$ have been classified. They are called $\mathbb{A}$-branched algebras   \cite[Definition 4.3]{BG14} and  $\mathbb{\widetilde{A}}$-branched algebras \cite[Definition 3.2 ]{Gub1} respectively.

The aim of this work is to compute the dimension of $\operatorname{HH}^{n}(\Lambda)$ for any $m$-cluster tilted algebra $\Lambda$ of type $\tilde{\mathbb{A}}$. Since the  Hochschild cohomology is a derived invariant \cite{Ric91} we can extend the result to  the class  of $\tilde{\mathbb{A}}$-branched algebras.  Moreover, using the results of Redondo and Rom\'an we obtain that the non-trivial multiplicative structure of the Gerstenhaber algebra $\rm HH^*(\Lambda)$ for  $\Lambda$  an $m$-cluster tilted algebra  of type $\tilde{\mathbb{A}}$ (or more generally  an  $\tilde{\mathbb{A}}$-branched algebra) depends on the existence of  $m$-saturated cycles.

We now state the main results of this paper (for the definitions of the terms used, we refer the reader to section 3  below).

\subsection*{Theorem A}\textit{ Let $\Lambda$ be an   $\tilde{\mathbb{A}}$-branched algebra with parameters $s_1,s_2,k_1,k_2,r$ and  $m\geq 1$.   Then:}

\begin{itemize}
\item[(a)] $\Hh{0}= \begin{cases} 2 & \mbox{ if \ }  r=1,  \  k_1=s_1=0 \text{ or  }  r=-1,  \  k_2=s_2=0;
\\
 1  & \mbox{otherwise}. \end{cases}$

\item[(b)] $\Hh{1}= \begin{cases} 3 & \mbox{ if \ }  k_1=k_2=0, \  s_1=s_2=1 ;
\\
 k_1+k_2+2  & \mbox{if \ }  r=0,  \  s_1=1,  \  k_1=0 \text{ or  }  r=0,  \  s_2=1,  \  k_2=0 ; \\
k_1+k_2 + 1 & \mbox{otherwise}. \end{cases}$

\item[(c)] If $\operatorname{char}\K\neq 2$ and $n\geq 2$, then $$\Hh{n}= \begin{cases}  k_1+k_2 & \mbox{ if \ } n\equiv 0,1 \pmod{\rm lcm(m+2,2)}; \\ 0 &
\mbox{otherwise.} \end{cases}$$

\item[(d)] If $\operatorname{char}\K= 2$ and $n\geq 2$, then $$\Hh{n}= \begin{cases} k_1+k_2 & \mbox{ if \ } n\equiv 0,1 \pmod{m+2};\\ 0 & \mbox{otherwise.} \end{cases}.$$
\end{itemize}


Moreover, we have:

\subsection*{Theorem B}\textit{ Let $\Lambda$ be an  $m$-cluster tilted algebra of type $\tilde{\mathbb{A}}$.
If its bound quiver contains at least one $m$-saturated cycle,
then the cup product defined in $\operatorname{HH}^*(\Lambda)$ is non trivial; and if $\operatorname{char}\K=0$, then the Lie bracket is also
non trivial.}
\medskip

In particular, specializing to the case $m=1$, we recover a particular case of  known results of \cite{Va}.\\

The paper is organized as follows: In section 1 we recall facts about gentle algebras, Hochschild cohomology, $m$-cluster tilted algebras and $\tilde{\mathbb{A}}$-branched algebras. Also  we establish the facts about $m$-cluster tilted algebras of type $\mathbb{\widetilde{A}}$  that will be used in the sequel. In section 2 we recall facts about the dimension of the Hochschild cohomology groups of gentle algebras. Section 3  and 4 are devoted to the proof of the Theorem  A and B respectively. In section 5 we show that the dimension of the Hochschild cohomology  is not a complete invariant for $\tilde{\mathbb{A}}$-branched algebras.\\

\section{Preliminaries}

\subsection{Gentle algebras}
While we briefly recall some  concepts concerning bound quivers and algebras, we refer the reader to \cite{ASS06} or \cite{ARS95}, for instance, for unexplained notions.

Let \K \  be an algebraic closed field. A quiver $Q$ is the data of two sets, $Q_0$ (the \textit{vertices}) and $Q_1$ (the \textit{arrows}) and two maps \mor{s,t}{Q_1}{Q_0} that assign to each arrow $\a$ its \textit{source} $s(\a)$ and its \textit{target} $t(\a)$. We write \mor{\a}{s(\a)}{t(\a)}. If $\a,\b \in Q_1$ are such that $t(\a)=s(\b)$ then the composition of $\a$ and $\b$ is the path $\a\b$. This extends naturally to paths of arbitrary positive length. The \emph{path algebra} $\K Q$ is the $\K$-algebra whose basis is the set of all paths in $Q$, including one stationary path $e_x$ at each vertex $x\in Q_0$, endowed with the  multiplication induced from the composition of paths. In case $|Q_0|$ is finite, the sum of the stationary paths  - one for each vertex - is the identity.

If the quiver $Q$ has no oriented cycles, it is called \emph{acyclic}. A \emph{relation} in $Q$ is a $\K$-linear combination of paths of length at least $2$ sharing source and target.  A relation which is a path is called \emph{monomial}, and the relation is \emph{quadratic} if the paths appearing in it have all length $2$. Let $\mathcal{R}$ be a set of relations.
 Given $\mathcal{R}$ one can consider the two-sided ideal of $\K Q$ it generates $I=\langle \mathcal{R}\rangle \subseteq  \langle Q_1 \rangle^2$. It is called \emph{admissible} if there exists a natural number $r\geqslant 2$ such that $\left\langle Q_1 \right\rangle^r \subseteq I$. The pair $(Q,I)$ is a \emph{bound quiver}, and $A=\K Q/I$ is its associated algebra. We said $\K Q/I$ is quadratic monomial if $I$ is generated by paths of length two.
It is known that any finite dimensional basic algebra over an algebraically closed field is obtained in this way, see \cite{G72}.

The class of gentle algebras  defined by Assem and Skowro\'nski in \cite{AH81}  has been extensively studied, see \cite{AS87, AG08, BB10, Buan2008, Murphy2010, SZ03}, for instance, and is particularly well understood, at least from the representation theoretic point of view. This class includes, among others, iterated tilted,  cluster tilted and $m$-cluster tilted algebras of types $\mathbb{A}$ and $\tilde{\mathbb{A}}$, and, as shown in \cite{SZ03}, is closed under derived equivalence.

Recall  that an algebra  $A= \K Q/I$ is said to be \emph{gentle} if  $I$ is generated by a set of monomial quadratic relations such that:

\begin{enumerate}
 \item[G1.] For every vertex $x\in Q_0$  at most two arrows enter or leave $x$;
 \item[G2.] For every arrow $\alpha\in Q_1$ there exists at most one arrow $\beta$ and one arrow $\gamma$ in $ Q_1$ such that $\alpha\beta\not\in I$, $\gamma\alpha\not\in I $;
 \item[G3.] For every arrow $\alpha\in Q_1$ there exists at most one arrow $\beta$ and one arrow $\gamma$ in $Q_1$ such that $\alpha\beta\in I$, $\gamma\alpha\in I$.
\end{enumerate}


\subsection{$m$-Cluster tilted algebras of type $\widetilde{\mathbb{A}}$}

Since $m$-cluster tilted algebras of type $\widetilde{\mathbb{A}}$ are gentle, and in particular, quadratic monomial, we assume that $\K Q/I$ is quadratic monomial, unless otherwise stated. Given a bound quiver $(Q,I)$ and an integer $m$, a cycle is called \emph{ $m$-saturated} if it is an oriented cycle consisting of $m+2$ arrows such that the composition of any two consecutive arrows on this cycle belongs to $I$. Recall that two relations  $r$ and $r'$ in the bound quiver $(Q,I)$ are said to be   \textit{consecutive} if there is a walk  $v=wr=r'w'$ in  $(Q,I)$ such that $r$ and $r'$ point in the same direction and share an arrow. Also, recall that given a connected quiver $Q$, its \emph{Euler characteristic} is  $\chi(Q)=|Q_1|-|Q_0|+1$.

Recall from \cite[Definition 7.2]{Gub1} that   $A\cong \K Q/I$ is an  \textit{algebra with root } if its bound quiver is gentle, connected, having exactly $\chi(Q)-1$  $m$-saturated cycles and no loops.

Since  $\chi(Q)$ is the number of $m$-saturated cycles plus 1, we know that  $(Q,I)$  has at least a non $m$-saturated cycle $\widetilde{\mathcal{C}}$. We will refer to the  cycle $\widetilde{\mathcal{C}}$ as a \textit{root cycle}. Moreover, since $A$ is a finite dimensional algebra, any  oriented  root cycle $\widetilde{\mathcal{C}}$ has at least one relation.

Once we fix a root cycle $\widetilde{\mathcal{C}}$, we can fix one drawing of it, i.e., one embedding into the plane. Thus, we can speak of clockwise and counterclockwise oriented arrows of $\widetilde{\mathcal{C}}$. But we have to consider that this notation is only unique up to reflection of the cycle, i.e., up to changing the roles of clockwise and counterclockwise oriented arrows.

Recall that for $m\geq 2$, an $m$-cluster tilted algebras of type $\widetilde{\mathbb{A}}$ does not have to be connected.

\subsection*{Theorem }\cite[Theorem 8.16]{Gubitosi}\textit{ A connected algebra $A=\K Q/I$ is  a connected component of an $m$-cluster tilted algebra of type $\widetilde{\mathbb{A}}$ if and only if  $(Q,I)$ is a gentle bound quiver satisfying the following conditions: }
\textit{\begin{itemize}
  \item [(a)] It can contain a non-saturated  cycle  $\widetilde{\mathcal{C}}$ in such a way that $A$ is an algebra with root $\widetilde{\mathcal{C}}$.
  \item [(b)] If it does not contain a non-saturated  cycle as in $(a)$, then the only  possible cycles  are  $m$-saturated.
  \item [(c)] Outside of an $m$-saturated cycle it can have at most  $m-1$ consecutive relations.
   \item [(d)] $r_h\equiv r_a$ modulo $m$.
\end{itemize}}

where $r_h$ is the \textit{number of clockwise internal relations} and $r_a$ is the \textit{number of counterclockwise internal relations}. See  \cite[Definition 8.13]{Gubitosi} for a complete definition of these numbers of relations on a root cycle.

The class of algebras derived equivalents to $m$-cluster tilted algebras of type $\widetilde{\mathbb{A}}$ with a root cycle were called in \cite{Gub1}  $\widetilde{\mathbb{A}}$-branched algebras.

\subsection{Hochschild cohomology}

Given an algebra $A$, the $n$-th Hochschild cohomology group of $A$ with coefficients in the bimodule $_A A_{A}$ is the extension group $\operatorname{HH}^{n}(A)=\Ext{A-A}{^n}{A}{A}$. The sum $\operatorname{HH}^{\ast}(A)= \bigoplus_{n\geqslant 0} \operatorname{HH}^{n}(A)$ has the additional structure of a Gerstenhaber algebra, see \cite{G63}. From \cite{R89, K04} this structure is known to be a derived invariant, that is, invariant under derived equivalence.

Let $A=\K Q /I$ be a monomial quadratic algebra. Define $\G_0=Q_0,\ \G_1=Q_1$, and for $n\geqslant 2$, $\G_n=\{\a_1\a_2\cdots \a_n|\ \a_i\a_{i+1}\in I\}$. Moreover, let $E=\ts{k}Q_0$ be the semi-simple algebra isomorphic to $A/\rad{A}$, and $\ts{k}\G_n$ the $\ts{k}$-vector space with basis $\G_n$. The latter are also $E-E$-bimodules in an obvious way. In what follows, tensor products are taken over $E$.

As mentioned before, the sum $\operatorname{HH}^{\ast}(A) = \bigoplus_{n\geqslant 0} \operatorname{HH}^{n}(A)$ has additional structure given by two products, which we now describe, see \cite{G63}. The two products are defined using the bar resolution $( A^{\otimes^n} , b_n)_{n\geq 0}$ of $A$.

Given $f\in \Hom{E-E}{A^{\otimes^{n}}}{A}$, $g\in\Hom{E-E}{A^{ \otimes^{m}}}{A}$, and $i\in\{1,2,\ldots n\}$, define the element $f\circ_i g$ such that $f \circ_i g(v_1 \otimes \cdots \otimes v_{n+m-1}) = f(v_1 \otimes \cdots \otimes v_{i-1} \otimes g( v_i \otimes \cdots \otimes v_{i+m-1} ) \otimes v_{i+m} \otimes
\ldots \otimes v_{n+m-1} )$.  In addition, define the \textit{composition product} as
$$f \circ g = \sum_{i=1}^n (-1)^{(i-1)(m-1)} f\circ_i g$$
and the \emph{bracket} to be $$[f,g]= f\circ g - (-1)^{(n-1)(m-1)}g \circ f$$

\medskip

On the other hand, denote by \mor{\sigma}{A\otimes A}{A} the multiplication of $A$. The \emph{cup-product} $f\cup g$ of $f$ and $g$ is  the  element of $\Hom{E-E}{A^{\otimes^{m+n}}}{A}$ defined by $f\cup g = \sigma(f \otimes g)$.

Recall that a Gerstenhaber algebra is a graded $\K$-vector space $A$ endowed with a product which makes $A$ into a graded commutative algebra, and a bracket $[ - ,  - ]$ of degree $1$ that makes $A$ into a graded Lie algebra, and such that $[x, yz] = [x, y]z + (-1)^{(|x|-1)|y|} y[x, z]$, that is, a graded analogous of a Poisson algebra. The cup product $\cup$ and the bracket $[ - , - ]$ defined above define products in $\operatorname{HH}^{\ast}(A)$, which becomes then a Gerstenhaber algebra (see \cite{G63}).

\section{ Hochschild cohomology groups of gentle algebras}

The dimension of the  Hochschild cohomology groups of gentle algebras have already been computed in \cite{Lad12b} by Ladkani and in \cite{RR15} by Redondo and Rom\'an. In the
first case,  these results have been expressed in terms of the derived invariant introduced by Avella-Alaminos and Geiss (AG-invariant for short) in \cite{AG08}. In the second one, Redondo and Rom\'an used Bardzell's resolution (see \cite{Ba97}).

The AG-invariant is a function $\phi_{A}:\mathbb N^2\rightarrow \mathbb N$ depending on the ordered pairs generated by a certain algorithm. The
number $\phi_{A}(n,m)$ counts how often each pair $(n,m)$ appears in the algorithm.  See \cite{AG08} for a complete definition of AG-invariant.

Since $m$-cluster tilted algebras of type $\tilde{\mathbb{A}}$  are gentle, we use the computation of Ladkani to prove our main result. The statement of Ladkani is the following:

\medskip

\begin{teo}\cite[Corollary 1]{Lad12b}\label{TeoLad} Let $A$ be a gentle algebra. Define $\psi_A(n)=\sum_{d\mid n} \phi_A(0,d)$ for $n\geq 1$. Then
\begin{itemize}
\item[(a)] $\Hh{0}=1+ \phi_A(1,0)$.
\item[(b)] $\Hh{1}=1+\abs{Q_1}-\abs{Q_0}+\phi_A(1,1)+ \begin{cases} \phi_A(0,1) & \mbox{if }
\operatorname{char}(k)=2\\ 0& \mbox{otherwise} \end{cases}$.
\item[(c)] $\Hh{n}=\phi_A(1,n)+a_n\psi_A(n)+b_n\psi_A(n-1)$ for $n\geq 2$, where $$(a_n,b_n)=
\begin{cases} (1,0)& \mbox{if } \operatorname{char}k\neq 2 \mbox{ and $n$ is even}\\ (0,1)&\mbox{if } \operatorname{char}k\neq 2 \mbox{ and $n$ is odd}\\
(1,1)&\mbox{if } \operatorname{char}k=2 \end{cases} $$
\end{itemize}

\end{teo}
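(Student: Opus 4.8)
The plan is to realize $\HH{n}$ as the cohomology of the cochain complex obtained by applying $\Hom{A-A}{-}{A}$ to Bardzell's minimal projective bimodule resolution of the monomial quadratic algebra $A$ (see \cite{Ba97}). For such an algebra this resolution has $n$-th term $A\otimes_E \K\G_n\otimes_E A$ with the sets $\G_n$ as defined above, so after the standard adjunction the cochains become $\Hom{E-E}{\K\G_n}{A}$, with $\G_0=Q_0$ and $\G_1=Q_1$. Since $E=\K Q_0$ is semisimple, I would first record that each cochain group has the explicit basis consisting of pairs $(p,w)$ with $p\in\G_n$ and $w$ a nonzero path of $A$ \emph{parallel} to $p$ (that is, $s(w)=s(p)$ and $t(w)=t(p)$), and write down the differential $d^n$, which sends $(p,w)$ to the signed sum of the parallel pairs obtained by prolonging $p$ by an arrow at either end and reducing $w$ accordingly. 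Because $A$ is gentle, the arrows available for such prolongations are essentially unique, so $d^n$ is very rigid.

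The heart of the argument is to show that, away from degrees $0$ and $1$, this complex splits as a direct sum of subcomplexes indexed by the closed walks produced by the AG-algorithm, and to compute each piece. I would organise the basis pairs $(p,w)$ according to the alternating permitted/forbidden thread walk in which they sit. The decisive case is a \emph{cyclic forbidden thread}, i.e.\ an oriented cycle $c=\a_1\cdots\a_d$ all of whose consecutive products, including the wrap-around $\a_d\a_1$, lie in $I$; these are exactly the walks recorded by $\phi_A(0,d)$. Each such $c$ contributes the elements $c^{k}\in\G_{kd}$ for every $k\geq 1$, paired with the trivial path at the base vertex of $c$, and I would show that these assemble into a periodic subcomplex whose cohomology is concentrated in the degrees $kd$ and $kd+1$ --- precisely the behaviour of $\operatorname{HH}^{\ast}$ of a cyclic truncated algebra $\K[x]/(x^{e})$. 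Summing the degree-$n$ contributions over all cyclic forbidden threads then yields $\psi_A(n)=\sum_{d\mid n}\phi_A(0,d)$ from the classes with $kd=n$ and $\psi_A(n-1)$ from the shifted classes with $kd=n-1$; the coefficients $(a_n,b_n)$ record which of the two adjacent classes survives the connecting differential, which is where the parity of $n$ and the characteristic enter.

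The remaining, non-cyclic walks each involve one forbidden thread together with a nontrivial permitted part and are counted by $\phi_A(1,m)$; by a direct rank computation on the short piece of the complex attached to such a walk I would check that it contributes a single class in degree $m$, producing the $\phi_A(1,n)$ summand of (c) and the $\phi_A(1,1)$ summand of (b). Degrees $0$ and $1$ I would treat by hand: $\operatorname{HH}^{0}(A)=Z(A)$ is spanned by the identity together with the central permitted cycles counted by $\phi_A(1,0)$, giving (a); and $\operatorname{HH}^{1}(A)$ is derivations modulo inner derivations, whose dimension is the Euler term $1+\abs{Q_1}-\abs{Q_0}$ corrected by the $\phi_A(1,1)$ walks and, in characteristic $2$, by the extra squaring classes coming from the length-one cyclic forbidden threads counted by $\phi_A(0,1)$.

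The main obstacle is the bookkeeping of the second step: proving that the cochain complex really does split along AG-walks, and then pinning down the rank of the connecting differential on each periodic piece so that the surviving cohomology lands in exactly the degrees prescribed by $(a_n,b_n)$. The delicate point is the characteristic-$2$ anomaly: the connecting map between the degree-$kd$ and degree-$(kd+1)$ classes is, up to sign, multiplication by a factor that is invertible in characteristic $\neq 2$ but vanishes in characteristic $2$, so that a class which is killed in the former case survives --- and doubles the count --- in the latter. Tracking the $(-1)$-signs in Bardzell's differential through the cyclic prolongations and matching the resulting parities against the divisor sums defining $\psi_A$ is the step that will require the most care.
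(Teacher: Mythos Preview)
The paper does not prove this statement at all: it is quoted verbatim as \cite[Corollary~1]{Lad12b} and used as a black box in the proof of Theorem~A. There is therefore no ``paper's own proof'' to compare your proposal against.

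That said, your outline is the right strategy and is essentially the one Ladkani uses: compute cohomology from Bardzell's resolution, split the cochain complex along the permitted/forbidden thread walks underlying the AG-invariant, identify the cyclic forbidden threads with periodic subcomplexes contributing in degrees $kd$ and $kd+1$, and treat degrees $0$ and $1$ separately. One point you gloss over: the ``non-cyclic'' walks recorded by $\phi_A(n,m)$ with $n\geq 2$ also appear, and you must check that the corresponding subcomplexes are acyclic, so that only the $\phi_A(1,n)$ terms survive; your sentence ``the remaining, non-cyclic walks \ldots\ are counted by $\phi_A(1,m)$'' skips this. Otherwise the plan is sound, including your identification of the characteristic-$2$ phenomenon as a sign collapse in the connecting map on the periodic pieces.
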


\medskip


We include here an alternative proof of Theorem \ref{TeoLad} rewriting \cite[Corollary 1]{RR15} in terms of the invariant $\phi_A$. From now on, we follow the notation of \cite{RR15}.

\medskip

Let $A=\K Q/I$ be a gentle algebra,  we fix a set $\mathcal P$ of paths in $Q$ such that the set $\{\gamma+I \mid \gamma\in \mathcal P\}$ is a basis of $A$.  Then  the dimension of each Hochschild cohomology group  of $A$ can be computed using  the following  subsets of pairs of parallel paths.

\medskip

 Let $^-(Q_0 // \mathcal P_1)^- $ be the set of pairs $(e_r, \gamma)\in Q_0 \times \mathcal P$ such that $s(\gamma)=t(\gamma)=e_r$,  $|\gamma|\geq 1$, $Q_1\gamma\subset I$ and $\gamma Q_1\subset I$.

  Let  $^-(0,0)^-_1 $ be the set of pairs $(\alpha, \gamma)\in Q_1\times \mathcal P$ such that $s(\a)=s(\gamma)$, $t(\a)=t(\gamma)$,  $Q_1\gamma\subset I$ and $\gamma Q_1\subset I$.        

\medskip

For $n\geq 2$, write $\G_n=\{\alpha_1\alpha_2\cdots\alpha_n\mid  \alpha_i\alpha_{i+1}\in I\}$.

Let $^-(0,0)^-_n$ be the set of  pairs  $(\alpha_1\alpha_2\cdots\alpha_n, \gamma) \in \G_n\times \mathcal P$ such that $s(\alpha_1)=s(\gamma)$, $t(\alpha_n)=t(\gamma)$, $\gamma \notin \alpha_1 \K Q\cup \K Q\alpha_n$, $Q_1\gamma\subset I$ and $\gamma Q_1\subset I$.

  Let $\mathcal{C}_n$ be the set of  pairs  $(\alpha_1\alpha_2\cdots\alpha_n, e_r) \in \G_n\times Q_0$ such that $s(\alpha_1)=t(\alpha_n)=e_r$ and $\alpha_n\alpha_1\in I$. The pairs belonging to  any set $\mathcal{C}_n$ are called \textit{complete pairs}. Let $\mathcal{C}_n(0)$ be the subset of complete pairs $(\alpha_1\alpha_2\cdots\alpha_n, e_r) \in \G_n\times Q_0$ such that  it does not exist $\gamma\in Q_1\setminus\{\alpha_n\}$  nor $\beta\in Q_1\setminus\{\alpha_1\}$  with $\alpha_n\beta, \gamma\alpha_1\in I$.
The cyclic group $\mathbb{Z}_n=<t>$ of order $n$ acts on the set  $\mathcal{C}_n$ with the action given by $t(\alpha_1\alpha_2\cdots\alpha_n, e_{s(\alpha_1)})=(\alpha_n\alpha_1\alpha_2\cdots\alpha_{n-1}, e_{s(\alpha_n)})$. A complete pair $(\alpha_1\alpha_2\cdots\alpha_n, e_r)$ is called  \textit{gentle} if $t^m(\alpha_1\alpha_2\cdots\alpha_n, e_r)\in \mathcal{C}_n(0)$ for any $m\in \mathbb{Z}$.  We will denote by  $\mathcal G_{n}$  the set of gentle pairs in $\G_n\times Q_0$ and  by $\K \mathcal G_n$ the $\K$-vector space generated by the elements of $\mathcal G_n$.

In the same way let $\mathcal{I}_n$ be the set of  pairs  $(\alpha_1\alpha_2\cdots\alpha_n, e_r) \in \G_n\times Q_0$ such that $s(\alpha_1)=t(\alpha_n)=e_r$ and $\alpha_n\alpha_1\notin I$. The pairs belonging to  any set $\mathcal{I}_n$ are called \textit{incomplete pairs}. An incomplete pair $(\alpha_1\alpha_2\cdots\alpha_n, e_r)$ is called  \textit{empty} if there is no relation  $\beta\gamma\in I$  with $t(\beta)=r=s(\gamma)$.  We will denote by  $\mathcal E_{n}$  the set of empty pairs in $\G_n\times Q_0$.\\

\medskip

\begin{teo}\cite[Corollary 1]{RR15}\label{TeoRR} Let $A=\K Q/I$ be a gentle algebra. Then
\begin{itemize}
\item[(a)] $\HH{0}=1+ \abs{^-(Q_0 // \mathcal P_1)^-}$.
\item[(b)] $\HH{1}=1+\abs{Q_1}-\abs{Q_0}+\abs{\Par{1}}+ \begin{cases} \abs{(Q_1//Q_0)} & \mbox{if }
\operatorname{char}(k)=2\\ 0& \mbox{otherwise} \end{cases}$.
\item[(c)] $\HH{n}=\abs{\Par{n}}+\abs{\mathcal E_n}+a_n\Ge{n}+b_n\Ge{n-1}$ for $n\geq 2$, where $$(a_n,b_n)=
\begin{cases} (1,0)& \mbox{if } \operatorname{char}k\neq 2 \mbox{ and $n$ is even}\\ (0,1)&\mbox{if } \operatorname{char}k\neq 2 \mbox{ and $n$ is odd}\\
(1,1)&\mbox{if } \operatorname{char}k=2 \end{cases} $$
\end{itemize}

\end{teo}

\medskip

\begin{proof}[Proof of Theorem \ref{TeoLad}]


%
%
%
%
%

By definition $\phi_A(0,d)$ is the number of oriented cycles of length $d$ in which each pair of consecutive arrows form a relation. In particular $\phi_A(0,1)$ is the number of loops; i.e. arrows parallels to a vertex. Then  $\phi_A(0,1)=\abs{(Q_1//Q_0)}$.

For $n\geq 2$ let  $\a_1\cdots\a_d$ be a cycle of length $d$ as above with $d\mid n$, then $(\a_1\cdots\a_d)^{\frac{n}{d}}\in \mathcal G_n$. Moreover, if $\rho\in \mathcal G_n$ the classes of $\rho$ and $ t^i(\rho)$ are equal in the quotient $\K \mathcal G_{n}/ \operatorname{Im}(1-t)$, for   $i\in \{1,\cdots, n\}$.  In consequence $\Ge{n}=\sum_{d\mid n} \phi_A(0,d)=\psi_A(n)$.

The fact that the pair $(1,h)$ arises in the algorithm of $\phi_A$ means that the initial permitted thread $H_0$ is equal to the final permitted thread $H_1$ and the forbidden thread $\Pi_0$ which ends in $t(H_0)$ and starts in $s(H_1)$ has length $h$.

If $h=0$ that means that $\Pi_0=e_v$ for some vertex $v$, then  $H_0=H_1$
 is an oriented cycle $\a_1\cdots\a_n\in \mathcal P$.  Moreover,  $e_v$ is a trivial forbidden thread if $v\in Q_0$ with  $\abs{\{\a\in Q_1 \mid s(\a)=v \}}\leq 1$, $\abs{\{\a\in Q_1 \mid t(\a)=v \}}\leq 1$ and if $\b,\gamma \in Q_1$ are such that $s(\gamma)=v=t(\b)$ then $\gamma\b\in I$. Therefore, $\a_n\a_1\in I$ and $(e_v,H_0)\in ^-(Q_0 // \mathcal P_1)^-$. Finally $\phi_A(1,0)=\abs{^-(Q_0 // \mathcal P_1)^-}$.

 If $h=1$ we have $\Pi_0=\a$ for some  $\a\in Q_1$. Then $H_0=H_1=\gamma\in \mathcal P$ with $s(\gamma)=s(\a)$ and $t(\gamma)=t(\a)$.
 If there is $\b\in Q_1$ such that $s(\b)=t(\gamma)$,   since $\gamma$ is a permitted thread, $\gamma\b\in I$. Then $\gamma Q_1\subset I$. Analogously we can see that $ Q_1\gamma \subset I$. Therefore $(\a,\gamma)\in \Par{1}$ and   $\phi_A(1,1)=\abs{\Par{1}}$.

 If $h\geq 2$, $\Pi_0=\a_1\cdots\a_h\in \G_h$. Then $\Pi_0$ can be an oriented cycle or not.  Assume that $\Pi_0$ is an oriented cycle. Then  $H_0=H_1=e_r$ for some vertex $r$. But $e_r$ is a trivial permitted thread if $r\in Q_0$ with  $\abs{\{\a\in Q_1 \mid s(\a)=r \}}\leq 1$, $\abs{\{\a\in Q_1 \mid t(\a)=r \}}\leq 1$ and if $\b,\gamma \in Q_1$ are such that $s(\gamma)=r=t(\b)$ then $\gamma\b\notin I$. Therefore $\a_h\a_1\notin I$ and $(\Pi_0, e_r)$ is an empty pair. Now assume that  $\Pi_0$ is not an oriented cycle. Then  $H_0=H_1=\gamma \in \mathcal P$ with $s(\gamma)=s(\a_1)$ and $t(\gamma)=t(\a_h)$. The condition $\varepsilon (H_0)=-\varepsilon(\Pi_0)$ implies that $\gamma \notin \K Q\alpha_n$. In fact, if $\gamma \in \K Q\alpha_n$, $\varepsilon(\a_n)=\varepsilon (H_0)=-\varepsilon(\gamma)=-\varepsilon(\a_n)$ but $\varepsilon$ takes values on the set $\{1,-1\}$. Analogously,  the condition $\sigma (H_1)=-\sigma(\Pi_0)$ implies that $\gamma \notin \alpha_1 \K Q$. If there is $\b\in Q_1$ such that $s(\b)=t(\gamma)$,   since $\gamma$ is a permitted thread, $\gamma\b\in I$. Then $\gamma Q_1\subset I$. Analogously we can see that $ Q_1\gamma \subset I$ and, in consequence,  $(\Pi_0,\gamma)\in \Par{h}$. We conclude that  $\phi_A(1,h)=\abs{\Par{h}}+\abs{\mathcal E_h}$.

 \end{proof}

\medskip

\section{Main result}

According to \cite[Theorem 6.4]{Gub1} any $m$-cluster tilted algebra of type $\tilde{\mathbb{A}}$  is derived equivalent to  an  $\tilde{\mathbb{A}}$-branched algebra with \textit{normal form} (See \cite[Definition 4.1]{Gub1}).


\begin{figure}[H]

$$\SelectTips{eu}{10}\xymatrix@C=.1pc@R=.2pc{  &&& &&& &&& &&& &&& &&& & .\ar[dddl] \ar@{.}@/^/[r]& . &\\
 &&& &&& &&& &&& &&& &&& &&&\\
 &&& &&& &&& &&& &&& &&& &&&\\
&&& .\ar[rrr]^{\a_2} & \ar@{.}@/^/[lld] & &    \ar@{.}[rrr] &&\ar@{.}@/_/[rr]& . \ar[rrr]^{\a_{r+1}} &&& . \ar@{.}[rrr] &&& . \ar[rrr]^{\a_{s_2}} &&&  . \ar[rrr]_{\a_{s_2+1}} &&& .\ar[uuul] \ar@{.}[rrrdd] &&&  && .\ar[ddll] \ar@{.}@/^/[dr] &&& &&&\\
&&&  &&& &&& &&& &&& &&& &&& &&& &&& . &&& &&& \\
 &&& &&& &&& &&& &&& &&& &&& &&&  . \ar[rrdd]_{\a_{s_2+k_2}}    \\
  &&&  &&& &&& &&& &&& &&& &&& &&& &&& \\
 \scriptstyle{0} \ar[rrruuuu]^{\a_1} \ar[rrrdddd]_{\b_1}  &&& &&& &&& &&& &&& &&& &&&  &&& && .\ar[uuur] \ar[dddr]&&& \\
 &&&  &&& &&& &&& &&& &&& &&& &&& &&& \\
  &&&  &&& &&& &&& &&& &&& &&& &&&   \ar[rruu]^{\b_{s_1+k_1}}  . &&& \\
  &&&  &&& &&& &&& &&& &&& &&&  &&& &&& .  \\
 &&& .\ar[rrr]_{\b_2}  &&& \ar@{.}[rrrrrrrrr] &&&   &&&   &&& . \ar[rrr]_{\b_{s_1}} &&&. \ar[rrr]^{\b_{s_1+1}} &&&  . \ar[dddl] \ar@{.}[rrruu] &&& && . \ar[uull] \ar@{.}@/_/[ur]  &&& &&&\\
&&&  &&& &&& &&& &&& &&& &&&\\
&&&  &&& &&& &&& &&& &&& &&&\\
 &&&  &&& &&& &&& &&& &&& & . \ar[uuul]\ar@{.}@/_/[r]& . &\\
 &&&&&&&&&&&&&&&&&&}$$

\caption{The bound quiver of a normal form.}
\end{figure}
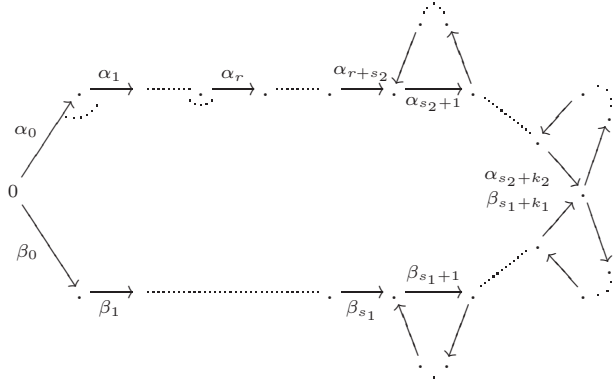

%
%
%

Observe that we allow  $s_1=k_1=0$.\\

Then writing \cite[Proposition 4.2]{Gub1} in terms of the previous parameters we obtain:

\begin{prop}\label{phibranched}
Let $\Lambda$ be an   $\tilde{\mathbb{A}}$-branched algebra with parameters $s_1,s_2,k_1,k_2$ and $r$. Then
$$\phi_{\Lambda}=(mk_1+s_1+r,s_1)^*+ (mk_2+s_2-r,s_2)^*+ (k_1+k_2).(0,m+2)^*$$
\end{prop}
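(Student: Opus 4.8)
\textbf{Proof strategy for Proposition \ref{phibranched}.}
The plan is to translate the computation of the AG-invariant given in \cite[Propositions 4.3 and 4.5]{Gub1} — stated there in terms of the original geometric/combinatorial data of the branched algebra (free arrows on rays, internal and external union relations, orientations on the root cycle) — into the five numerical parameters $s_1,s_2,k_1,k_2,r$ introduced in \cite[Definition 7.2]{Gub1}. Since the AG-invariant is computed by running the Avella-Alaminos–Geiss algorithm on the gentle bound quiver of $\Lambda$, and since $\Lambda$ is derived equivalent to an $\tilde{\mathbb{A}}$-branched algebra in normal form (as recalled from \cite[Theorem 6.4]{Gub1}), I would first reduce to the normal form whose bound quiver is the one drawn in Figure 1. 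The AG-invariant is a derived invariant \cite{AG08}, so this reduction is harmless, and on the normal form the three families of permitted walks that the algorithm produces are visibly in bijection with: (i) the ``upper'' branch through the $\alpha$-arrows and the $k_2$ saturated cycles attached along it, (ii) the ``lower'' branch through the $\beta$-arrows and the $k_1$ saturated cycles attached along it, and (iii) the closed walks running once around each of the $k_1+k_2$ saturated cycles.

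The main body of the argument is then a careful bookkeeping of the two numbers attached to each walk by the algorithm. For the walk of type (iii) around a single $m$-saturated cycle the algorithm returns the pair $(0,m+2)$, because an $m$-saturated cycle is by definition an oriented cycle of $m+2$ arrows in which every pair of consecutive arrows lies in $I$; there are $k_1+k_2$ such cycles, giving the summand $(k_1+k_2)\cdot(0,m+2)^*$. For the walk of type (i): each of the $s_2$ ``free'' counterclockwise arrows on the relevant side contributes $1$ to the second coordinate, while each of the $k_2$ saturated cycles traversed along this branch contributes a detour of length $m$ (one goes in along one arrow and out along another, skipping $m$ arrows of the cycle because of the relations), plus the $r$ internal-relation discrepancy on the root cycle shifts the first coordinate; assembling these gives $(mk_2+s_2-r,\,s_2)$. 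The walk of type (ii) is the mirror computation with the roles of clockwise and counterclockwise exchanged, where the sign of $r$ flips because $r$ counts clockwise internal relations minus counterclockwise ones, yielding $(mk_1+s_1+r,\,s_1)$. Summing the three contributions with multiplicities gives exactly the claimed formula for $\phi_\Lambda$.

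The step I expect to be the real obstacle is matching the contribution of an attached $m$-saturated cycle along a branch to the factor $m$ in $mk_1$ and $mk_2$, and simultaneously getting the $\pm r$ term right: one has to verify that when the AG-algorithm walks along a ray and meets a saturated cycle (or meets internal union relations on the root cycle), the length it accumulates is precisely $m$ per cycle and precisely $r$ in total for the internal relations, rather than some shifted count such as $m+1$ or $m+2$. This is entirely a matter of chasing the definition of the algorithm on the explicit normal form of Figure 1, using conditions (c) and (d) of \cite[Theorem 7.16]{Gubitosi} (equivalently conditions (b) and (c) of the definition of $\tilde{\mathbb{A}}$-branched algebra) to control how relations are distributed; once the normal form is fixed the walks are rigid and the count is forced. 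I would also check the degenerate case $s_1=k_1=0$ (allowed, as noted after Figure 1), where the lower branch collapses and the pair $(r,0)^\ast$ should be read off directly, confirming that the formula remains correct.
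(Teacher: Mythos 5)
Your proposal matches the paper's (very terse) argument: the paper proves Proposition \ref{phibranched} simply by asserting that \cite[Propositions 4.3 and 4.5]{Gub1} rewrite, in terms of the parameters $s_1,s_2,k_1,k_2,r$, to the stated formula, which is exactly the translation you carry out after the same reduction to the normal form of \cite[Theorem 6.4]{Gub1}. Your extra bookkeeping of the Avella-Alaminos--Geiss algorithm on Figure 1 supplies detail the paper omits, and it is consistent with the claimed pairs $(mk_1+s_1+r,s_1)$, $(mk_2+s_2-r,s_2)$ and $(k_1+k_2)\cdot(0,m+2)$.
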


where given a pair $(a,b)\in \mathbb{N} \times \mathbb{N}$,  $(a,b)^\ast$ denotes the characteristic function of the set  $\{(a,b)\} \subseteq \mathbb{N} \times \mathbb{N}$.\qed

%
%
%
%
%

\medskip

We are now able to prove our main result.

\begin{proof}[Proof of Theorem A] Using  Theorem \ref{TeoLad} and Proposition \ref{phibranched}, we compute the dimension of Hochschild cohomology groups for an $m$-cluster tilted algebra of type $\widetilde{\mathbb{A}}$:

\begin{itemize}

\item[(i)] Assume $(mk_1+s_1+r,s_1)=(1,0)$ or $(mk_2+s_2-r,s_2)=(1,0)$. A simple computation gives $s_1=k_1=0$ and $r=1$ or  $s_2=k_2=0$ and $r=-1$ and in both cases $\phi_{\Lambda}(1,0)=1$.

\item[(ii)] The equality $(mk_1+s_1+r,s_1)=(1,1)$ holds if $r=0$,  $ s_1=1$ and   $k_1=0$. Analogously  $(mk_2+s_2-r,s_2)=(1,1)$ if $r=0$,  $s_2=1$ and  $k_2=0$.  Then $\phi_{\Lambda}(1,1)=2$  if both equalities hold in which case $ s_1=s_2=1$ and   $k_1=k_2=0$ or $\phi_{\Lambda}(1,1)=1$ if just one equality holds.

\item[(iii)] Since $m\geq1$ there are no pairs $(0,1)$ and therefore $\phi_{\Lambda}(0,1)=0$.

\item[(iv)]  Assume  $(mk_1+s_1+r,s_1)=(1,n)$ or  $(mk_2+s_2-r,s_2)=(1,n)$ with $n\geq 2$. The first equality never holds and the second one holds if and only if $k_2=0$ and $r+1=n=s_2$. An algebra with those parameters is not an  $m$-cluster tilted of type $\tilde{\mathbb{A}}$ for $m\geq 1$.  Then $\phi_{\Lambda}(1,n)=0$ for $n\geq 2$.

\item[(v)] Since $\phi_{\Lambda}(0,d)= k_1+k_2$ if $d=m+2$ and $0$ otherwise, the function $\psi_{\Lambda}(n)$, defined in Theorem \ref{TeoLad}, depends on $n\equiv 0\pmod{m+2}$, then $$\psi_{\Lambda}(n)=\begin{cases} k_1+k_2 &
    \mbox{if } n\equiv 0\pmod{m+2}\\ 0 & \mbox{otherwise.} \end{cases}$$

Therefore the final expression of the $\Hh{n}$ depends on the value of $n$ and $n-1$ modulo $m+2$, the characteristic of $\K$ and the parity of $n$. We start assuming that $\operatorname{char} \K \neq 2$. If $n\equiv 0 \pmod{m+2}$, we have $\psi_{\Lambda}(n)= k_1+k_2$ and $\psi_{\Lambda}(n-1)=0$, then

\begin{equation*}  \Hh{n}= \phi_{\Lambda}(1,n)+ \begin{cases} k_1+k_2 &  \text{if } n \mbox{ is  even }\\ 0 & \text{if } n \mbox{ is   odd} \end{cases} \end{equation*}

If $n\equiv 1 \pmod{m+2}$, we have $\psi_{\Lambda}(n-1)= k_1+k_2$ and $\psi_{\Lambda}(n)=0$, then

\begin{equation*}  \Hh{n}= \phi_{\Lambda}(1,n)+ \begin{cases} k_1+k_2 &  \text{if } n \mbox{ is  odd }\\ 0 & \text{if } n \mbox{ is   even} \end{cases} \end{equation*}

Since the parity of $n$ can be described in terms of the value of $n$ module $2$, $\Hh{n}$ depends on the value of $n$ modulo $\rm lcm(m+2,2)$. Therefore,  $$\Hh{n}= \phi_{\Lambda}(1,n)+ \begin{cases} k_1+k_2 & \mbox{ if \ } n\equiv 0,1 \pmod{\rm lcm(m+2,2)}; \\ 0 &
\mbox{otherwise.} \end{cases}$$

Finally if $\operatorname{char} \K=2$, the parity of $n$ is not important and  $\Hh{n}$ depends only   on  the value of $n$ module
$m+2$. Then:

$$\Hh{n}=\phi_{\Lambda}(1,n) + \begin{cases} k_1+k_2 & \mbox{ if $n\equiv 0,1\pmod {m+2} $}\\ 0 & \mbox{ otherwise.} \end{cases}$$

\end{itemize}
\end{proof}

\section{Non-trivial structure of the Gerstenhaber algebra $\operatorname{HH}^*(\Lambda)$}

Finally we show that the non-trivial structure of the Gerstenhaber algebra $\operatorname{HH}^*(\Lambda)$ depends only on the existence of $m$-saturated cycles. 
%

\begin{teo}\cite[Theorems 3 and 4]{RR15}\label{gerstenhaberstructure}
Let $A=\K Q/I$ be a gentle algebra such that $\mathcal G_{n}$ is not empty for some $n>0$. Then the cup product defined in $\operatorname{HH}^*(A)$   is non-trivial. If in addition $char\K=0$, then the Lie bracket is also non-trivial.
\end{teo}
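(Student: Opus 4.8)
Since the statement is taken from \cite{RR15}, I sketch the Bardzell-resolution argument that underlies it. The plan is to attach to a gentle pair an explicit family of cocycles, to show that their cup products are again nonzero cocycles, and that a suitable bracket is a nonzero multiple of such a class once $2$ (equivalently, every nonzero integer) is invertible. I work throughout in the complex $\Hom{E-E}{\K\Gamma_{\bullet}}{A}$ computing $\operatorname{HH}^{*}(A)$; because $A$ is quadratic monomial its differential is induced by Bardzell's bimodule resolution and collapses to the two boundary terms
$$(\delta f)(\alpha_1\cdots\alpha_{n+1})=\alpha_1\,f(\alpha_2\cdots\alpha_{n+1})+(-1)^{n+1}f(\alpha_1\cdots\alpha_{n})\,\alpha_{n+1}.$$

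Fix a gentle pair $(\alpha_1\cdots\alpha_{\ell},e_r)\in\mathcal G_{\ell}$. Its underlying word $w=\alpha_1\cdots\alpha_{\ell}$ is an oriented cycle all of whose cyclic consecutive products lie in $I$, so each cyclic rotation $p_i$ of the repeated word $w^{j}$ belongs to $\Gamma_{j\ell}$. In degree $N=j\ell$ I would define the cochain $\omega^{(j)}$ by $\omega^{(j)}(p_i)=\varepsilon_i\,e_{s(\alpha_i)}$ on the $\ell$ rotations and $0$ on every other element of $\Gamma_{N}$. Substituting into the boundary formula and using the defining condition of a gentle pair --- that every rotation lies in $\mathcal C_{\ell}(0)$, so the relation chain cannot be prolonged by any arrow other than those already on the cycle --- one finds that $\delta\omega^{(j)}$ is supported on the length-$(N+1)$ cyclic extensions $q_i$, with value $\bigl(\varepsilon_{i+1}+(-1)^{N+1}\varepsilon_i\bigr)\alpha_i$. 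Hence $\omega^{(j)}$ is a cocycle exactly when $\varepsilon_{i+1}=(-1)^{N}\varepsilon_i$; iterating once around the cycle, a nonzero solution exists iff $N$ is even when $\operatorname{char}\K\neq2$, and always when $\operatorname{char}\K=2$. This is the source of the parity shift in Theorem A: for odd $\ell$ the first fundamental class appears in degree $2\ell$. Matching these cocycles against the dimension count of Theorem \ref{TeoLad} (the summand $k_1+k_2$ produced by $\phi_A(0,\ell)$) identifies each $\omega^{(j)}$ with a basis vector of the corresponding group, so it is a nonzero class and not a coboundary.

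For the cup product I use $f\cup g=\sigma(f\otimes g)$, which on $\Gamma_{N+N'}$ splits a path into its first $N$ and last $N'$ arrows. Evaluating $\omega^{(j)}\cup\omega^{(j')}$ on a rotation of $w^{j+j'}$ multiplies two idempotents with equal source index (by $\ell$-periodicity), giving back the same idempotent; since $\omega^{(j)}$ vanishes off rotations, the cochains $\omega^{(j)}\cup\omega^{(j')}$ and $\omega^{(j+j')}$ coincide on the nose. Choosing $j,j'$ so that all three classes exist (for instance taking the even-degree fundamental class and squaring it), this exhibits a nonzero cup product, so the multiplication on $\operatorname{HH}^{*}(A)$ is non-trivial.

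Finally assume $\operatorname{char}\K=0$. Let $\omega$ be the fundamental class of smallest (even) degree $N_0$ and $\eta$ the companion class of degree $N_0+1$ guaranteed by the dimension formula (a cochain on the length-$(N_0+1)$ cyclic extensions valued in the arrows $\alpha_i$). Their bracket lands in $\operatorname{HH}^{2N_0}$, precisely the degree carrying the next fundamental class $\omega^{(2)}$. I would compute $[\omega,\eta]=\omega\circ\eta-(-1)^{(N_0-1)N_0}\eta\circ\omega$ from $f\circ_i g=f(l^{\,i-1}\otimes g\otimes l^{\,N_0-i})$ and show that the surviving insertions add up to a nonzero integer multiple of $\omega^{(2)}$; that integer is invertible exactly in characteristic zero, which forces the bracket to be non-trivial. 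I expect this composition-product calculation to be the main obstacle: one must control how inserting the cyclic chain into the $i$-th slot reglues the two relation cycles into a rotation of $w^{2}$, keep track of the signs $(-1)^{(i-1)(m-1)}$, and verify that the contributions reinforce rather than cancel --- the cancellation in characteristic $2$, and for the wrong parity, being exactly what the hypotheses rule out. When $\ell$ is odd one runs the same argument starting from the degree-$2\ell$ and degree-$(2\ell+1)$ classes.
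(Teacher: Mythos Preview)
The paper does not prove this theorem: it is quoted verbatim from \cite{RR15} and used as a black box in the one-line proof of Theorem~B that follows. So there is nothing in the paper to compare your argument against; any comparison is really with Redondo--Rom\'an's own proof.

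That said, your sketch of the cup-product half is essentially the right reconstruction of the argument in \cite{RR15}: to each gentle pair one attaches the cochains $\omega^{(j)}$ supported on cyclic rotations of $w^{j}$ and valued in idempotents, checks they are cocycles (the gentleness condition $t^{m}(\cdot)\in\mathcal C_{\ell}(0)$ is exactly what kills the extra boundary terms), and observes $\omega^{(j)}\cup\omega^{(j')}=\omega^{(j+j')}$. One remark: your appeal to Theorem~\ref{TeoLad} to certify that $\omega^{(j)}$ is not a coboundary is external to the Redondo--Rom\'an framework; in \cite{RR15} this is done directly inside the Bardzell complex, and importing Ladkani's dimension count here makes the argument slightly circular in spirit (Ladkani also builds on Bardzell). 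It is cleaner to argue directly that no $(j\ell-1)$-cochain can have coboundary equal to $\omega^{(j)}$, using again the gentle condition.

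For the Lie bracket you are candid that the computation is only projected, not carried out. This is a genuine gap: you have not exhibited a single nonzero bracket. The actual mechanism in \cite{RR15} is somewhat different from what you outline --- the relevant nonvanishing bracket involves the odd-degree ``arrow-valued'' class paired with itself (or an analogous self-bracket), and the integer that appears is the length $\ell$ of the cycle, which is why characteristic zero (or more precisely $\operatorname{char}\K\nmid\ell$) is needed. Your proposed pairing $[\omega,\eta]$ lands in the wrong degree to match $\omega^{(2)}$ as you claim (if $\omega$ has degree $N_{0}$ and $\eta$ degree $N_{0}+1$, the bracket has degree $2N_{0}$, which is fine, but the sign bookkeeping and the identification with $\omega^{(2)}$ are not as automatic as for the cup product, because $\circ_{i}$ genuinely reindexes the cycle). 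If you want to complete this, compute $[\eta,\eta]$ instead and track the factor of $\ell$ that emerges from summing the $\ell$ insertion positions.
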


\begin{proof}[Proof of Theorem B] By the previous theorem 
it is enough to observe that the set $\mathcal G_{n}$ is not empty if and only if $n\equiv 0\pmod{m+2}$ and $\Lambda$ contains at least one $m$-saturated cycle.

\end{proof}

\section{Further consequences}

We conclude this work showing that the dimension of the Hochschild cohomology groups, together with the number of vertices of the ordinary quiver, do not yield a complete system of invariants  for $\tilde{\mathbb{A}}$-branched algebras. In \cite[Theorem 1.2]{BG14} the authors  showed that any two  $m$-cluster tilted algebras $A$ and $B$ of type $\mathbb{A}$  are derived equivalent if and only if  $\operatorname{HH}^*(A)\cong \operatorname{HH}^*(B)$ and $\abs{Q_0(A)}=\abs{Q_0(B)}$. However, the following example show that the dimension of the Hochschild cohomology loss information.

\medskip

\begin{ejem}
 Let $(Q,I)$ and $(Q',I')$ be the bound quivers
\medskip
\begin{center}
\begin{tabular}{ccccc}
$\SelectTips{eu}{10}\xymatrix@R=1pc@C=.6pc{
&&.\ar[drr]	&	&	& &\\
.\ar[urr] \ar[dr]	&&&&   .\ar[dl]	&& \\
&.\ar[rr]^{\a_0}	&& . \ar[d]^{\a_1}	&	&  & \\
& . \ar[u]^{\a_3} & & .\ar[ll]^{\a_2}&  &   &  }$    & && &     $\SelectTips{eu}{10}\xymatrix@R=1pc@C=.6pc{
&&.\ar[drr]	&	&	& &\\
.\ar[urr] \ar[dr]	&&&&   .\ar[dl]_{\a_0}	&& \\
&.\ar[rr]	&& . \ar[dr]_{\a_1}	&	&  . \ar[ul]_{\a_3}& \\
&  & && . \ar[ur]_{\a_2} & &  }$ \\
&&&&\\

$I=\langle \a_i\a_{i+1}| 0\leqslant i \leqslant 3\rangle$  &&&&  $I'=\langle \a_i\a_{i+1}| 0\leqslant i \leqslant 3 \rangle$  \\
and indices are read modulo 3.&&&& and indices are read modulo 3.
   \end{tabular}
\end{center}

\medskip

\end{ejem}

The algebras $A=\K Q/I$ an $B=\K Q'/I'$ are both $2$-cluster tilted algebras of type $\tilde{\mathbb{A}}$ not derived equivalents because $\phi_A=(3,1)^*+(3,3)^*+(0,4)^*$ and $\phi_B=(2,2)^*+(4,2)^*+(0,4)^*$. However $\abs{Q_0(A)}=\abs{Q_0(B)}$ and $\operatorname{dim}_{\K}\operatorname{HH}^i(A)=\operatorname{dim}_{\K} \operatorname{HH}^i(B)$ for all $i$.

\medskip

\section*{Acknowledgements}
The author gratefully  acknowledges financial support from  the  \emph{Agencia Nacional de Investigaci\'{o}n e Innovaci\'{o}n (ANII)} of Uruguay.

\medskip

\end{document}